\documentclass[dvipdfmx,12pt]{amsart}

\usepackage{amsmath,amsthm,amssymb,mathdots,shuffle}
\usepackage{setspace}
\usepackage{mathtools}
\usepackage{color}
\usepackage[capitalize]{cleveref}
\usepackage{autonum}
\setstretch{1.1}
\allowdisplaybreaks
\textwidth=15cm
\textheight=21cm
\hoffset=-1.3cm
\baselineskip=18pt plus 3pt
\newtheorem{theorem}{Theorem}[section]

\newtheorem{lemma}[theorem]{Lemma}

\theoremstyle{remark}
\newtheorem{remark}[theorem]{Remark}

\numberwithin{equation}{section}


\newcommand{\Z}{{\mathbb Z}}

\newcommand{\C}{{\mathbb C}}
\newcommand{\Q}{{\mathbb Q}}

\newcommand{\ep}{{\varepsilon}}


\title{On a unified double zeta function of Mordell--Tornheim type}

\author[S.~Kadota]{Shin-ya Kadota}
\address[S.~Kadota]{Faculty of Fundamental Science\\
	National Institute of Technology (KOSEN), Niihama College\\
	Niihama-city, Ehime, 792-8580\\
	Japan}
\email{s.kadota@niihama-nct.ac.jp}

\author[T.~Okamoto]{Takuya Okamoto}
\address[T.~Okamoto]{
	Institute of Liberal Arts and Science\\
	Toyohashi University of Technology\\
	Tempaku-cho, Aichi, 441-8580\\
	Japan}
\email{okamoto.takuya.ze@tut.jp}

\author[M.~Ono]{Masataka Ono}
\address[M.~Ono]{
	Global Education Center\\
	Waseda University\\
	Shinjuku, Tokyo, 169-8050\\
	Japan}
\email{m-ono@aoni.waseda.jp}

\author[K.~Tasaka]{Koji Tasaka}
\address[K.~Tasaka]{Department of Information Science and Technology\\
	Aichi Prefectural University\\
	Nagakute-city, Aichi, 480-1198\\
	Japan
}
\email{tasaka@ist.aichi-pu.ac.jp}

\subjclass{11M32}

\keywords{Double zeta functions of Mordell--Tornheim type}

\begin{document}

\begin{abstract}
We consider {a} double zeta function of Mordell--Tornheim type and compute its values at non-positive integer points.
We then discuss a possible generalization of the Kaneko--Zagier conjecture for all integer points.
\end{abstract}

\maketitle


\section{Introduction}

The double zeta function of Mordell--Tornheim type is defined by 
\[ \zeta_{MT}(s_1,s_2;s_3)=\sum_{m,n\ge1} m^{-s_1}n^{-s_2}(m+n)^{-s_3},\]
which converges absolutely when $\Re (s_1+s_3),\Re (s_2+s_3)>1$ and $\Re (s_1+s_2+s_3)>2$ (see \cite[Theorem 2.2]{OkamotoOnozuka15}).
The special values of this function at positive integer points are first studied by Tornheim \cite{Tornheim50} and independently by Mordell \cite{Mordell58} for the case $s_1=s_2=s_3$, and also rediscovered by Witten \cite{Witten91} in his volume formula for certain moduli spaces related to theoretical physics (see also Zagier's number theoretical treatment \cite{Zagier94}).
As a function, Matsumoto \cite[Theorem 1]{Matsumoto02} proves that the function $\zeta_{MT}(s_1,s_2;s_3)$ can be analytically continued to the whole $\C^3$-space.
Its true singularities are also determined (see also \cite[Theorem 6.1]{MatsumotoNakamuraOchiaiTsumura08}).
As an application, it is clarified that non-positive integers $s_1,s_2,s_3$ are points of indeterminancy, i.e., the values of $\zeta_{MT}(s_1,s_2;s_3)$ at non-positive integer points depend on a limiting process.
Explicit formulas for these values in terms of generalized Bernoulli numbers are computed by Komori \cite[Theorem 3]{Komori08}.

In this paper, we wish to study special values of the {function}
\[\omega_{\mathcal{U}}(s_1,s_2,s_3)=(-1)^{ s_3}\zeta_{MT}(s_1,s_2;s_3)+(-1)^{ s_2}\zeta_{MT}(s_3,s_1;s_2)+(-1)^{ s_1}\zeta_{MT}(s_2,s_3;s_1),\]
where {$s_1,s_2,s_3$ are complex variables and set} $(-1)^s=e^{\pi i s}$.
The function $\omega_{\mathcal{U}}(s_1,s_2,s_3)$ originates from the previous work in \cite{BachmannTakeyamaTasaka}, where they introduce the values of $\omega_{\mathcal{U}}(s_1,s_2,s_3)$ at positive integer points as a Mordell--Tornheim type analogue of the symmetric multiple zeta values (see also \cite{OnoSekiYamamoto21}). 
One of main results of this paper is an explicit evaluation for {coordinatewise limits} of $\omega_{\mathcal{U}}(s_1,s_2,s_3)$ at non-positive integer points.

\begin{theorem}\label{thm:main}
For any non-negative integers $m_1,m_2,m_3\in \Z_{\ge0}$ {and $a,b,c\in\{1,2,3\}$ such that $\{a,b,c\}=\{1,2,3\}$}, we have
\[ {\lim_{\ep_a\rightarrow0}\lim_{\ep_b\rightarrow0}\lim_{\ep_c\rightarrow0}}\omega_{\mathcal{U}}(-m_1+\ep_{1},-m_2+\ep_{2},-m_3+\ep_{3}) = \begin{cases} 1 & (m_1,m_2,m_3)=(0,0,0)\\ 0 & (m_1,m_2,m_3)\neq (0,0,0)\end{cases} .\]
\end{theorem}

 The contents of this paper are as follows.
In order to motivate our work, we briefly recall the Kaneko--Zagier conjecture on finite/symmetric multiple zeta values in \S2.
\S3 gives a result on true singularities of the function $\omega_{\mathcal{U}}(s_1,s_2,s_3)$.
\S4 is devoted to proving Theorem \ref{thm:main}.
Finally, in \S5, we discuss a possible generalization of the Kaneko--Zagier conjecture for the multiple zeta function of Mordell--Tornheim type.

\section{Background}

This paper is to mimic the study of the unified multiple zeta function introduced by Komori \cite{Komori21}.
We briefly review his results to motivate our work.

For each $r\ge0$, we call a tuple $\boldsymbol{k}=(k_1,\ldots,k_r)$ of positive integers an index and $k_1+\cdots+k_r$ the weight.
We regard the empty index $\emptyset$ as the unique index of weight 0 (and $r=0$).
We set $F(\emptyset)$ to be a unit element for any function $F$ on indices.

We first recall the Kaneko--Zagier conjecture \cite{KanekoZagier}.
For each index $\boldsymbol{k}=(k_1,\ldots,k_r)$ and positive integer $n$, define the multiple harmonic sum $H_n(\boldsymbol{k})$ by
\[ H_n(\boldsymbol{k}) = \sum_{1\le n_1<\cdots<n_r\le n} \frac{1}{n_1^{k_1}\cdots n_r^{k_r}}.\]
The empty sum is understood as 0.
If the last component $k_r$ is greater than 1, the limit at $n\rightarrow \infty$ exists and is called the multiple zeta value, denoted by
\begin{equation}\label{eq:MZV}
\zeta( \boldsymbol{k})=\sum_{1\le n_1<\cdots<n_r} \frac{1}{n_1^{k_1}\cdots n_r^{k_r}}.
\end{equation}
Kaneko and Zagier introduce two different types of multiple zeta values constructed from the above objects.
One of them is the finite multiple zeta value $\zeta_{\mathcal{A}}(\boldsymbol{k})$ defined for each index $\boldsymbol{k}$ by
\[\zeta_{\mathcal{A}}(\boldsymbol{k})=\big(H_{p-1}(\boldsymbol{k})\mod p\big)_p\]
in the $\Q$-algebra $\mathcal{A}=\big(\prod_{p}\Z/p\Z\big) \big/ \big(\bigoplus_p \Z/p\Z\big)$, where $p$ runs over all primes.
Another is the symmetric multiple zeta value $\zeta_{\mathcal{S}}(\boldsymbol{k})$ defined for each index $\boldsymbol{k}$ by
\[ \zeta_{\mathcal{S}}(\boldsymbol{k}) = \sum_{a=0}^r (-1)^{k_{a+1}+\cdots+k_r}\zeta^\shuffle(k_1,\ldots,k_a)\zeta^\shuffle(k_r,\ldots,k_{a+1})\mod \pi^2\mathcal{Z}\]
in the $\Q$-algebra $\mathcal{Z}/\pi^2\mathcal{Z}$, where we denote by $\mathcal{Z}$ the $\Q$-algebra generated by all multiple zeta values and by $\zeta^\shuffle(k_1,\ldots,k_r)\in\mathcal{Z}$ the shuffle regularized multiple zeta value.
Let $\mathbb{I}^+$ be the set of all indices.
The main conjecture of Kaneko and Zagier states that for a finite subset $\{a_{\boldsymbol{k}}\in\Q\mid \boldsymbol{k}\in \mathbb{I}^+\}$ of $\Q$, we have
\begin{equation}\label{eq:KanekoZagierConjecture}
\sum_{\boldsymbol{k}\in \mathbb{I}^+} a_{\boldsymbol{k}} \zeta_{\mathcal{S}}(\boldsymbol{k})=0 \ \stackrel{?}{\Longleftrightarrow}  \ \sum_{\boldsymbol{k}\in \mathbb{I}^+} a_{\boldsymbol{k}} \zeta_{\mathcal{A}}(\boldsymbol{k})=0.
\end{equation}

In \cite{Komori21}, Komori defines the {function} $\zeta_{\mathcal{U}}(s_1,\ldots,s_r)$ by
\[ \zeta_{\mathcal{U}}(s_1,\ldots,s_r)=\sum_{a=0}^r (-1)^{s_{a+1}+\cdots+s_r}\zeta(s_1,\ldots,s_a)\zeta(s_r,\ldots,s_{a+1}).\]
It is a function analogue of symmetric multiple zeta values.
A crucial feature is that the function $\zeta_{\mathcal{U}}(s_1,\ldots,s_r)$ is entire.
Thus, one can study the values $\zeta_{\mathcal{U}}(\boldsymbol{k})\in \C$ for any tuple $\boldsymbol{k}$ of integers.
These values are conjecturally elements in the polynomial ring $\mathcal{Z}[\pi i]$ over $\mathcal{Z}$.
Indeed, for all index $\boldsymbol{k}\in \mathbb{I}^+$, Komori \cite[Theorem 1.4]{Komori21} shows that $\zeta_{\mathcal{U}}(\boldsymbol{k})\in \mathcal{Z}[\pi i]$, and also that 
\begin{equation}\label{eq:zeta_U_cong}
\zeta_{\mathcal{U}}(\boldsymbol{k})\equiv \zeta_{\mathcal{S}}(\boldsymbol{k}) \mod \pi i \mathcal{Z}[\pi i].
\end{equation}

Now, assuming the conjecture $\zeta_{\mathcal{U}}(\boldsymbol{k})\stackrel{?}{\in} \mathcal{Z}[\pi i]$, one extends the definition of the symmetric multiple zeta values $\zeta_{\mathcal{S}}(\boldsymbol{k})$ to any tuple $\boldsymbol{k}$ of integers by
\[\zeta_{\mathcal{S}}(\boldsymbol{k})=\zeta_{\mathcal{U}}(\boldsymbol{k})  \mod \pi i \mathcal{Z}[\pi i].\]
Hereafter, we denote by $\mathbb{I}$ the set of all tuple $(k_1,\ldots,k_r)\in \Z^r \ (r\ge0)$ of integers.
A natural question to ask is then whether the conjecture \eqref{eq:KanekoZagierConjecture} holds for all tuple of integers.
Namely, for a finite subset $\{a_{\boldsymbol{k}}\in\Q\mid \boldsymbol{k}\in \mathbb{I}\}$ of $\Q$, we have
\begin{equation}\label{eq:KomoriConjecture}
\sum_{\boldsymbol{k}\in \mathbb{I}} a_{\boldsymbol{k}} \zeta_{\mathcal{S}}(\boldsymbol{k})=0 \ \stackrel{?}{\Longleftrightarrow}  \ \sum_{\boldsymbol{k}\in \mathbb{I}} a_{\boldsymbol{k}} \zeta_{\mathcal{A}}(\boldsymbol{k})=0.
\end{equation}
Note that, since there is no convergence issue, the finite multiple zeta values $\zeta_{\mathcal{A}}(\boldsymbol{k})$ are already defined for all $\boldsymbol{k}\in \mathbb{I}$ and that they span the same space with the $\Q$-vector space generated by the set $\{\zeta_{\mathcal{A}}(\boldsymbol{k})\mid \boldsymbol{k}\in \mathbb{I}^+\}$ (see \cite{Kaneko19}).
The main result of Komori \cite[Theorem 1.5]{Komori21} states that the conjecture \eqref{eq:KomoriConjecture} holds when we restrict $\boldsymbol{k}$ to tuples of non-positive integers.
A partial evidence for depth 2 case is also given in \cite[Proposition 1.6]{Komori21}.

In this paper we wish to study the Kaneko--Zagier conjecture \eqref{eq:KanekoZagierConjecture} for multiple zeta values of Mordell--Tornheim type, which are independently established in \cite{BachmannTakeyamaTasaka} and \cite{OnoSekiYamamoto21}.
Our terminology follows \cite{BachmannTakeyamaTasaka}. 
In this analogy, for each $\boldsymbol{k}=(k_1,\ldots,k_r)\in \mathbb{I}^+$, let 
\[ \omega_n(\boldsymbol{k})=\sum_{\substack{n_1+\cdots+n_r=n\\n_1,\ldots,n_r\ge1}} \frac{1}{n_1^{k_1}\cdots n_r^{k_r}}\]
and define
\[ \omega_{\mathcal{A}}(\boldsymbol{k}) = \big( \omega_p(\boldsymbol{k}) \mod p\big)_p \in \mathcal{A}\]
for $r\neq1$.
We call this the finite multiple omega value for short.
Note that the finite multiple omega value is, up to sign, equal to the finite multiple zeta value of Mordell--Tornheim type, which is first studied by Kamano \cite{Kamano16}.
As a real counterpart of this, we define the symmetric multiple omega value $\omega_{\mathcal{S}}(\boldsymbol{k})$ for $\boldsymbol{k}=(k_1,\ldots,k_r)\in \mathbb{I}^+$ with $r\neq1$ by
\[\omega_{\mathcal{S}}(\boldsymbol{k})=\sum_{a=1}^r (-1)^{k_a}\zeta_{MT}(\underbrace{k_1,\ldots,k_{a-1}}_{a-1},\underbrace{k_{a+1},\ldots,k_r}_{r-a};k_a)\in \mathcal{Z}/\pi^2\mathcal{Z},\]
where 
\begin{equation}\label{eq:MT}
\zeta_{MT}(k_1,\ldots,k_{r-1};k_r)=\sum_{n_1,\ldots,n_{r-1}\ge1} \frac{1}{n_1^{k_1}\cdots n_{r-1}^{k_{r-1}}(n_1+\cdots+n_{r-1})^{k_r}}
\end{equation}
are the multiple zeta values of Mordell--Tornheim type which can be written as $\Q$-linear combinations of multiple zeta values \eqref{eq:MZV} (see \cite[Theorem 1.1]{BradleyZhou10}).
A similar object to $\omega_{\mathcal{S}}(\boldsymbol{k})$ is introduced by a different method in \cite[Definition 4.5]{OnoSekiYamamoto21} (which in the case $\mathcal{S}$ differs in a sign).
The main conjecture of \cite[Conjecture 1.5]{BachmannTakeyamaTasaka} (see also \cite[\S4]{OnoSekiYamamoto21}) is that for a finite subset $\{a_{\boldsymbol{k}}\in\Q\mid \boldsymbol{k}\in \mathbb{I}^+, r\neq1\}$ of $\Q$, we have
\begin{equation}\label{eq:BTTConjecture}
\sum_{\boldsymbol{k}\in \mathbb{I}^+} a_{\boldsymbol{k}} \omega_{\mathcal{S}}(\boldsymbol{k})=0 \ \stackrel{?}{\Longleftrightarrow}  \ \sum_{\boldsymbol{k}\in \mathbb{I}^+} a_{\boldsymbol{k}} \omega_{\mathcal{A}}(\boldsymbol{k})=0.
\end{equation}
It can be seen from \cite[Theorem 1.6]{BachmannTakeyamaTasaka} that the conjecture \eqref{eq:BTTConjecture} is supported by the Kaneko--Zagier conjecture \eqref{eq:KanekoZagierConjecture}.

An ultimate goal of this project is to establish a Mordell--Tornheim analogue of Komori's conjecture \eqref{eq:KomoriConjecture}.
For that purpose, we need to find a suitable {function} whose values at integer points modulo $\pi^2$ satisfy the same relations with the finite multiple omega values.  
As a natural candidate, we consider the following function:
\[\omega_{\mathcal{U}}(s_1,\ldots,s_r)=\sum_{a=1}^r (-1)^{s_a}\zeta_{MT}(\underbrace{s_1,\ldots,s_{a-1}}_{a-1},\underbrace{s_{a+1},\ldots,s_r}_{r-a};s_a).\]
Similarly to \eqref{eq:zeta_U_cong}, the congruence $\omega_{\mathcal{U}}(\boldsymbol{k})\equiv \omega_{\mathcal{S}}(\boldsymbol{k}) \mod \pi^2 \mathcal{Z}$ holds for all $\boldsymbol{k}\in \mathbb{I}^+$, since $\zeta_{MT}(\boldsymbol{k})$ converges absolutely for any $\boldsymbol{k}=(k_1,\ldots,k_r)\in \mathbb{I}^+$ with $r\neq1$.
As a function, we see from \cite[Theorem 1]{Matsumoto03} that the function $\omega_{\mathcal{U}}(s_1,\ldots,s_r)$ can be meromorphically continued to $\C^r$ and its possible singularities are on
\begin{align}
&s_{j_1}+s_{j_2}=1-l \quad (1\le j_1<j_2\le r ,\ l\in \Z_{\ge0}),\\
&s_{j_1}+s_{j_2}+s_{j_3}=2-l \quad (1\le j_1<j_2<j_3\le r ,\ l\in \Z_{\ge0}),\\
&\vdots\\
&s_{j_1}+\cdots+s_{j_{r-1}}=r-2-l \quad (1\le j_1<\cdots <j_{r-1}\le r ,\ l\in \Z_{\ge0}),\\
&s_1+\cdots+s_r=r-1.
\end{align}
While the {function} $\zeta_{\mathcal{U}}(s_1,\ldots,s_r)$ cancels all possible singularities which come from singularities of multiple zeta functions, some of the above singularities of $\omega_{\mathcal{U}}(s_1,\ldots,s_r)$ will be true.
Thus the generalization of the conjecture \eqref{eq:BTTConjecture} via the {function $\omega_{\mathcal{U}}(s_1,\ldots,s_r)$} is not straightforward.
In this paper, as a part of the future project, we explicate the situation for the case $r=3$ and then discuss a possible generalization of \eqref{eq:BTTConjecture} to all integer points.

\section{Singularities}

By \cite[Theorem 1]{Matsumoto02}, the true singularities of the function $\zeta_{MT}(s_1,s_2;s_3)$ lie on 
$s_1+s_3=1-l,\ s_2+s_3=1-l$ ($l\in\Z_{\ge0}$) and $s_1+s_2+s_3=2$. 
Hence the possible singularities of $\omega_{\mathcal{U}}(s_1,s_2,s_3)$ lie on the subsets of $\C^3$ defined by one of the equations:
\[ s_{a}+s_{b}=1-l\ \ (1\leq a<b\leq 3,\ l\in\Z_{\ge0}),\quad s_1+s_2+s_3=2.\]
We now prove that these are true singularities.

\begin{theorem}\label{thm:singularities}
All points on the subsets of $\C^3$ defined by one of the equations $s_{a}+s_{b}=1-l$\ \ $(1\leq a<b\leq 3,\ l\in\Z_{\ge0})$ and $s_1+s_2+s_3=2$ are true singularities of the function $\omega_{\mathcal{U}}(s_1,s_2,s_3)$.
\end{theorem}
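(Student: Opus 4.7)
The plan is to compute the residue of $\omega_{\mathcal{U}}$ along each listed locus and show that it does not vanish identically, thereby forcing these loci to be true singularities. By the full $S_3$-symmetry of $\omega_{\mathcal{U}}(s_1,s_2,s_3)$---a consequence of the cyclic form of the definition together with $\zeta_{MT}(a,b;c)=\zeta_{MT}(b,a;c)$---it suffices to treat the two representative loci $s_1+s_2+s_3=2$ and $s_1+s_2=1-l$.

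The first step is to record the residues of $\zeta_{MT}$ along its own singular loci. Starting from
\[
\zeta_{MT}(s_1,s_2;s_3)=\frac{1}{\Gamma(s_3)}\int_0^\infty t^{s_3-1}\,\mathrm{Li}_{s_1}(e^{-t})\,\mathrm{Li}_{s_2}(e^{-t})\,dt
\]
and the asymptotic expansion $\mathrm{Li}_s(e^{-t})=\Gamma(1-s)t^{s-1}+\sum_{k\ge0}(-t)^k\zeta(s-k)/k!$ near $t=0$, the divergent contributions of the $\int_0^1$-part give
\[
\operatorname*{Res}_{s_1+s_2+s_3=2}\zeta_{MT}(s_1,s_2;s_3)=\frac{\Gamma(1-s_1)\Gamma(1-s_2)}{\Gamma(s_3)},
\]
\[
\operatorname*{Res}_{s_1+s_3=1-l}\zeta_{MT}(s_1,s_2;s_3)=\frac{(-1)^l(s_3)_l}{l!}\,\zeta(s_2-l)\qquad(l\in\Z_{\ge0}),
\]
with $(s_3)_l=s_3(s_3+1)\cdots(s_3+l-1)$; the residue along $s_2+s_3=1-l$ follows by exchanging $s_1$ and $s_2$.

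On the locus $s_1+s_2+s_3=2$, all three cyclic summands of $\omega_{\mathcal{U}}$ contribute. Applying the reflection formula $\Gamma(s)\Gamma(1-s)=\pi/\sin(\pi s)$ together with $(-1)^s\sin(\pi s)=(e^{2\pi is}-1)/(2i)$, the residue sum factors as
\[
\frac{\pi^2\bigl(e^{2\pi is_1}+e^{2\pi is_2}+e^{2\pi is_3}-3\bigr)}{2i\,\Gamma(s_1)\Gamma(s_2)\Gamma(s_3)\sin(\pi s_1)\sin(\pi s_2)\sin(\pi s_3)}.
\]
Since $e^{2\pi i(s_1+s_2+s_3)}=1$ on the locus, one has only to check that $x_1+x_2+x_3-3$ does not vanish identically on the complex $2$-torus $x_1x_2x_3=1$, which is clear.

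For $s_1+s_2=1-l$, only $\zeta_{MT}(s_3,s_1;s_2)$ and $\zeta_{MT}(s_2,s_3;s_1)$ contribute, giving the residue sum
\[
\frac{(-1)^l\zeta(s_3-l)}{l!}\Bigl[(-1)^{s_1}(s_1)_l+(-1)^{s_2}(s_2)_l\Bigr].
\]
A direct Pochhammer computation shows $(s_2)_l=(-1)^l(s_1)_l$ whenever $s_1+s_2=1-l$, so the bracket collapses to $2i\,(s_1)_l\sin(\pi s_1)$, which is not identically zero on the locus. The main technical work therefore lies in the residue derivations for $\zeta_{MT}$---the chief obstacle being to track the cancellations among the polylog asymptotics---after which the non-cancellation check for each locus reduces to a few lines of algebra.
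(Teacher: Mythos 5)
Your proposal is correct and follows essentially the same strategy as the paper's proof: extract the polar part of each $\zeta_{MT}$ along the hyperplane in question, sum the three signed cyclic contributions, and verify non-vanishing via Euler's reflection formula on $s_1+s_2+s_3=2$ (yielding the same factor $e^{2\pi i s_1}+e^{2\pi i s_2}+e^{2\pi i s_3}-3$) and via the identity $\binom{-s_3}{l}=(-1)^l\binom{-s_1}{l}$ on $s_a+s_b=1-l$, which is exactly your Pochhammer computation. The only difference is that the paper reads off these residues directly from Matsumoto's Mellin--Barnes decomposition \eqref{MBI} instead of re-deriving them from the polylogarithm integral representation, but the resulting singular parts and the final non-cancellation checks coincide.
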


\begin{proof}
We first recall the relevant material from \cite{Matsumoto02}.
For $s_1,s_2,s_3\in \C$, $M\in \Z_{>0}$ and $0<\eta<1$, let
\[I(s_1,s_2,s_3;M-\eta)=\frac{1}{2\pi i} \int_{(M-\eta)} \Gamma(s_3+z)\Gamma(-z)\zeta(s_1+s_3+z)\zeta(s_2-z)\,dz,\]
where the path of integration is the vertical line from $M-\eta-i\infty$ to $M-\eta+i\infty$.
From \cite[Eq.~(5.3)]{Matsumoto02}, we have
\begin{align}\label{MBI}
\begin{split}
\zeta_{MT}&(s_1,s_2;s_3)=\frac{\Gamma(s_2+s_3-1)\Gamma(1-s_2)}{\Gamma(s_3)}\zeta(s_1+s_2+s_3-1)\\
&\quad+\sum_{k=0}^{M-1}\binom{-s_3}{k}\zeta(s_1+s_3+k)\zeta(s_2-k)+\frac{1}{\Gamma(s_3)}I(s_1,s_2,s_3;M-\eta),
\end{split}
\end{align}
where $M$ is a sufficiently large positive integer. We find that the integral $I(s_1,s_2,s_3;M-\eta)$ is holomorphic on the region
$$
D_M=\{ (s_1,s_2,s_3)\in\C^3 \mid \ \Re s_3>-M+\eta, \Re (s_1+s_3)>1-M+\eta, \Re s_2<1+M-\eta\},
$$
because in $D_M$ the poles of the integrand are not on the path of integration. Since $M$ is arbitrarily large, by \eqref{MBI}, the function $\zeta_{MT}(s_1,s_2;s_3)$ can be meromorphically continued to $\C^3$. We note that, in the case $s_2=l$ for a positive integer $l$, the first and the second terms on the right-hand side of \eqref{MBI} are singular, where $M> l$, but these singularities cancel each other. Indeed, put $s_2=l+\ep$ for a positive integer $l$. 
Using the well-known asymptotic formula
\begin{equation}\label{eq:gamma}
\Gamma(s)=
\displaystyle\frac{(-1)^{-n}}{(-n)!}\frac{1}{s-n}+\gamma_{n}+O(|s-n|)\quad (n\in\Z_{\le0}),
\end{equation}
where $\gamma_n$ is the constant term of the expansion of $\Gamma(s)$ at $s=n \in \Z_{\le0}$, we have
\begin{align}
\zeta_{MT}(s_1,s_2;s_3)
&=\frac{\Gamma(s_3+l-1+\ep)\Gamma(1-l-\ep)}{\Gamma(s_3)}\zeta(s_1+s_3+l-1+\ep)\\
&\quad+\binom{-s_3}{l-1}\zeta(s_1+s_3+l-1)\zeta(1+\ep)+O(1)\\
&=\frac{\Gamma(s_3+l-1)}{\Gamma(s_3)}\frac{(-1)^{l-1}}{(l-1)!}\zeta(s_1+s_3+l-1)\frac{1}{\ep}\\
&\quad+\binom{-s_3}{l-1}\zeta(s_1+s_3+l-1)\frac{1}{\ep}+O(1)\\
&=O(1).
\end{align}

We now prove that $s_1+s_3=1-l \ (l\in \Z_{\ge0})$ determines the true singularity of the function $\omega_{\mathcal{U}}(s_1,s_2,s_3)$. 
Take $M\ge l+1$.
From \eqref{MBI} the singular part of $\zeta_{MT}(s_1,s_2;s_3)$ corresponding to $s_1+s_3=1-l$ comes from 
\[{\binom{-s_3}{l}}\zeta(s_1+s_3+l)\zeta(s_2-l).\]
{Since $\zeta_{MT}(s_2,s_3;s_1)=\zeta_{MT}(s_3,s_2;s_1)$}, the corresponding singular part of $\omega_{\mathcal{U}}(s_1,s_2,s_3)$ is
\begin{align}
(-1)^{ s_1}\binom{-s_1}{l}&\zeta(s_1+s_3+l)\zeta(s_2-l)+(-1)^{ s_3}\binom{-s_3}{l}\zeta(s_1+s_3+l)\zeta(s_2-l)\\
&=\left\{(-1)^{ s_1}\binom{-s_1}{l}+(-1)^{ s_3}\binom{-s_3}{l}\right\}\zeta(s_1+s_3+l)\zeta(s_2-l)\\
&=\left((-1)^{ s_1}-(-1)^{ -s_1}\right)\binom{-s_1}{l}\zeta(s_1+s_3+l)\zeta(s_2-l).
\end{align}
Since 
$$
\left((-1)^{ s_1}-(-1)^{ -s_1}\right)\binom{-s_1}{l}\zeta(s_2-l)\not\equiv 0,
$$
we see that $s_1+s_3=1-l$ determines the true singularity. 
Note that we have $\omega_{\mathcal{U}}(s_1,s_2,s_3)=\omega_{\mathcal{U}}(s_{\sigma(1)},s_{\sigma(2)},s_{\sigma(3)})$ for any permutation $\sigma\in \mathfrak{S}_3$.
Thus, $s_{a}+s_{b}=1-l\ \ (1\leq a<b\leq 3,\ l\in\Z_{\ge0})$ determine the true singularities.
 
For the case $s_1+s_2+s_3=2$, by \eqref{MBI} the corresponding singular part of $\omega_{\mathcal{U}}(s_1,s_2,s_3)$ comes from 
$$
\zeta(s_1+s_2+s_3-1)\times \sum_{j=0}^2 (-1)^{ s_{3+j}}\frac{\Gamma(s_{2+j}+s_{3+j}-1)\Gamma(1-s_{2+j})}{\Gamma(s_{3+j})},
$$
where we set $s_{a+j}=s_b$ if $a+j\equiv b \mod 3$ for $b=1,2,3$ and $a,j\in \Z$.
Hereafter, we abuse the same notation for other triplets of complex numbers or variables.
Since $s_2+s_3=2-s_1$, by Euler's reflection formula 
$$
\Gamma(s)\Gamma(1-s)=\frac{\pi}{\sin\pi s},
$$
we obtain 
\begin{align}
(-1)^{ s_{3}}\frac{\Gamma(s_{2}+s_{3}-1)\Gamma(1-s_{2})}{\Gamma(s_{3})} &=(-1)^{s_3}\frac{\Gamma(1-s_1)\Gamma(1-s_{2})}{\Gamma(s_{3})} \\
& = \frac{\pi^2}{\Gamma(s_1)\Gamma(s_2)\Gamma(s_3)} \frac{(-1)^{s_3}}{\sin \pi s_1 \sin \pi s_2}\\
&= \frac{\pi^2}{\prod_{a=1}^3 \Gamma(s_a)\sin \pi s_a } (-1)^{s_3}\sin \pi s_3\\
&= \frac{\pi^2}{\prod_{a=1}^3 \Gamma(s_a)\sin \pi s_a }\frac{(-1)^{2s_3}-1}{2i}.
\end{align}
Since 
$$
\frac{\pi^2}{2i\prod_{a=1}^3 \Gamma(s_a)\sin \pi s_a }\sum_{j=0}^2 \big( (-1)^{2s_{3+j}}-1\big)  \not\equiv 0
$$
holds for the case $s_1+s_2+s_3=2$, we conclude that $s_1+s_2+s_3=2$ determines the true singularity of $\omega_{\mathcal{U}}(s_1,s_2,s_3)$, which completes the proof.
\end{proof}

Our proof of Theorem \ref{thm:singularities} shows that if $(k_1,k_2,k_3)\in \Z^3$ is a singular point of the function $\omega_{\mathcal{U}}(s_1,s_2,s_3)$, then it is a point of indeterminancy.

\begin{remark}
Desingularizations of the double zeta function of Mordell--Tornheim type are studied in \cite{FurushoKomoriMatsumotoTsumura17,Nakamura21}.
\end{remark}

\section{Proof of Theorem \ref{thm:main}}

In this section, we first study an asymptotic behavior of the function $\zeta_{MT}(s_1,s_2;s_3)$ at $(s_1,s_2,s_3)=(-m_1,-m_2,-m_3)$ for non-negative integers $m_1, m_2, m_3$, and then, give a proof of Theorem \ref{thm:main}.

For $m_1,m_2,m_3\in \Z_{\ge0}$, let $M=2+m_1+m_3$.
Then $(-m_1,-m_2,-m_3)\in D_{M}$, so the integral $I(s_1,s_2,s_3;M-\eta)$ is analytic at $(-m_1,-m_2,-m_3)$. Hence we have the estimation
\begin{equation}\label{eq:I}
 \frac{1}{\Gamma(s_3)}I(s_1,s_2,s_3;M-\eta)= O(|s_3+m_3|).
\end{equation}
Note that a similar argument can be found in \cite[\S5]{MatsumotoOnozukaWakabayashi19}.

We now give the asymptotics of the function $\zeta_{MT}(s_1,s_2;s_3)$.
Let $B_k$ be the $k$-th Seki--Bernoulli number defined by
\[
\frac{te^t}{e^t-1}=\sum_{k\ge0}B_k\frac{t^k}{k!}.
\]
For non-negative integers $m_1, m_2, m_3$, let $m=m_1+m_2+m_3+2$ and define rational numbers $b(m_1,m_2;m_3)$ and $c(m_1,m_2;m_3)$ by
\begin{align}
b(m_1, m_2; m_3)&=m_1!m_2!m_3!\binom{m-1}{m_3}\frac{B_{m}}{m!},\\
c(m_1, m_2; m_3)&=\sum_{\substack{n_1+n_2=m_3\\n_1, n_2\ge0}}\binom{m_3}{n_1}\frac{B_{m_1+n_1+1}}{m_1+n_1+1}\frac{B_{m_2+n_2+1}}{m_2+n_2+1}.
\end{align}

\begin{lemma}\label{lem:MT_asymp}
For $m_1,m_2,m_3\in \Z_{\ge0}$ {and $a,b,c\in\{1,2,3\}$ such that $\{a,b,c\}=\{1,2,3\}$, we have
\begin{align}
&\lim_{\varepsilon_a\rightarrow 0}\lim_{\varepsilon_b\rightarrow 0}\lim_{\varepsilon_c\rightarrow 0}\zeta_{MT}(-m_1+\varepsilon_1,-m_2+\varepsilon_2;-m_3+\varepsilon_3)\\
&=\lim_{\varepsilon_a\rightarrow 0}\lim_{\varepsilon_b\rightarrow 0}\lim_{\varepsilon_c\rightarrow 0}\left\{ (-1)^{m_2}b(m_2, m_3; m_1)\frac{\varepsilon_3}{\varepsilon_2+\varepsilon_3}+(-1)^{m_1}b(m_3, m_1; m_2)\frac{\varepsilon_3}{\varepsilon_1+\varepsilon_3}+c(m_1, m_2; m_3)\right\}.
\end{align}}
\end{lemma}

\begin{proof}
Let $m=m_1+m_2+m_3+2$.
For the first term on the right side of \eqref{MBI}, we use \eqref{eq:gamma} and the expansion $\Gamma(s)=(n-1)!+O(|s-n|)$ at $s=n\in\Z_{\ge1}$ to obtain 
\begin{align}
&\frac{\Gamma(-m_2-m_3-1+\ep_2+\ep_3)\Gamma(1+m_2-\ep_2)}{\Gamma(-m_3+\ep_3)}\zeta(1-m+\ep_1+\ep_2+\ep_3)\\
&=\left\{(-1)^{m_3}m_3!\ep_3+O(|\ep_3|^2)\right\}\left\{\frac{(-1)^{m_2+m_3+1}}{(m_2+m_3+1)!}\frac{1}{\ep_2+\ep_3}+\gamma_{-m_2-m_3-1}+O(|\ep_2+\ep_3|)\right\}\\
&\quad\times\left\{m_2!+O(|\ep_2|)\right\}\left\{\zeta(1-m)+O(|\ep_1+\ep_2+\ep_3|)\right\}\\
&=(-1)^{m_2}b(m_2, m_3; m_1)\frac{\ep_3}{\ep_2+\ep_3}\\
&\quad+O\left(\left|\frac{\ep_2\ep_3}{\ep_2+\ep_3}\right|\right)+O\left(\left|\frac{\ep_3(\ep_1+\ep_2+\ep_3)}{\ep_2+\ep_3}\right|\right)+O(|\ep_3|)+O\left(\left|\frac{\ep_3^2}{\ep_2+\ep_3}\right|\right)\\
\end{align}
Here, for the last equality we have used $\zeta(1-m)=-B_m/m \ (m\in \Z_{\ge1})$.

For the second term on the right side of \eqref{MBI}, let $M=2+m_1+m_3$.
Since 
\begin{equation}\label{eq:binomial}
\binom{m-\ep}{k}=
\begin{cases}
\displaystyle\binom{m}{k}+O(|\ep|)&(m\ge k),\\
\displaystyle\frac{(-1)^{k-m}m!(k-m-1)!}{k!}\ep+O(|\ep|^2)&(m<k)
\end{cases}
\end{equation}
holds for $k, m\in\Z_{\ge0}$, we have
\begin{align}
&\sum_{k=0}^{M-1}\binom{m_3-\ep_3}{k}\zeta(-m_1-m_3+k+\ep_1+\ep_3)\zeta(-m_2-k+\ep_2)\\
&=\sum_{k=0}^{m_1+m_3}\binom{m_3-\ep_3}{k}\zeta(-m_1-m_3+k+\ep_1+\ep_3)\zeta(-m_2-k+\ep_2)\\
&\quad+\binom{m_3-\ep_3}{m_1+m_3+1}\zeta(1+\ep_1+\ep_3)\zeta(1-m+\ep_2)\\
&=c(m_1, m_2; m_3)+(-1)^{m_1}b(m_3, m_1; m_2)\frac{\ep_3}{\ep_1+\ep_3}\\
&\quad+O(|\ep_3|)+O(|\ep_1+\ep_3|)+O(|\ep_2|)+O\left(\left|\frac{\ep_3^2}{\ep_1+\ep_3}\right|\right)+O\left(\left|\frac{\ep_2\ep_3}{\ep_1+\ep_3}\right|\right).
\end{align}
Here, for the last equality we have also used the Laurent expansion of $\zeta(s)$ at $s=1$. Thus by \eqref{eq:I}, we obtain the desired result.
\end{proof}

In order to prove Theorem \ref{thm:main}, we use generating functions.
Let
\begin{align}
B(t_1, t_2, t_3)&=\sum_{m_1, m_2, m_3\ge0}(-1)^{m_1+m_2}b(m_1, m_2; m_3)\frac{t_1^{m_1}t_2^{m_2}t_3^{m_3}}{m_1!m_2!m_3!},\\
C(t_1, t_2, t_3)&=\sum_{m_1, m_2, m_3\ge0}(-1)^{m_3}c(m_1, m_2; m_3)\frac{t_1^{m_1}t_2^{m_2}t_3^{m_3}}{m_1!m_2!m_3!}.
\end{align}

\begin{lemma}\label{lem:gen}
We have
\begin{equation}\label{eq:gen_func} 
\sum_{j=0}^2 \big(B(t_{1+j}, t_{2+j}, t_{3+j})+C(t_{1+j}, t_{2+j}, t_{3+j})\big)=1,
\end{equation}
where we again put $t_{a+j}=t_b$ if $a+j\equiv b \mod 3$ for $b=1,2,3$ and $a,j\in \Z$.
\end{lemma}
\begin{proof}
Set
\[\beta(t)=\sum_{k\ge0}\frac{B_{k+1}}{(k+1)!}t^k=\frac{e^t}{e^t-1}-\frac{1}{t}.\]
One computes
\begin{align}
B(t_1, t_2, t_3)&=\sum_{ l, m_3\ge0}\left(\sum_{\substack{m_1+m_2=l\\m_1, m_2\ge0}}t_1^{m_1}t_2^{m_2}\right)(-1)^l\binom{l+m_3+1}{m_3}\frac{B_{l+m_3+2}}{(l+m_3+2)!}t_3^{m_3}\\
&=\sum_{l, m_3\ge0}\frac{t_1^{l+1}-t_2^{l+1}}{t_1-t_2}(-1)^l\binom{l+m_3+1}{m_3}\frac{B_{l+m_3+2}}{(l+m_3+2)!}t_3^{m_3}\\
&=\frac{1}{t_1-t_2}\sum_{k\ge1}\left(\sum_{\substack{l+m_3+1=k\\l, m_3\ge0}}(-1)^{l+1}\binom{k}{m_3}(t_2^{l+1}-t_1^{l+1})t_3^{m_3}\right)\frac{B_{k+1}}{(k+1)!}\\
&=\frac{1}{t_1-t_2}\sum_{k\ge1}\left((t_3-t_2)^k-t_3^k - (t_3-t_1)^k+t_3^k\right)\frac{B_{k+1}}{(k+1)!}\\
&=\frac{\beta(t_3-t_2)-\beta(t_3-t_1)}{t_1-t_2},
\end{align}
and
\begin{align}
C(t_1, t_2, t_3)&=\sum_{m_1, m_2, n_1, n_2\ge0}\frac{B_{m_1+n_1+1}}{m_1+n_1+1}\frac{B_{m_2+n_2+1}}{m_2+n_2+1}\frac{t_1^{m_1}t_2^{m_2}(-t_3)^{n_1+n_2}}{m_1!m_2!n_1!n_2!}\\
&=\prod_{j=1}^2 \left(\sum_{m_j, n_j\ge0}\frac{B_{m_j+n_j+1}}{(m_j+n_j+1)!} \binom{m_j+n_j}{m_j}t_j^{m_j}(-t_3)^{n_j}\right)\\
&=\beta(t_1-t_3)\beta(t_2-t_3).
\end{align}
Substituting these expressions into the left side of \eqref{eq:gen_func} and then using
\[ \sum_{j=0}^2 \frac{1}{(t_{1+j}-t_{2+j})(t_{3+j}-t_{2+j})} =0\quad \mbox{and} \quad \sum_{j=0}^2 \frac{e^{t_{1+j}-t_{3+j}}}{e^{t_{1+j}-t_{3+j}}-1} \frac{e^{t_{2+j}-t_{3+j}}}{e^{t_{2+j}-t_{3+j}}-1} =1, \]
we get the desired result.
\end{proof}

We are now in a position to prove Theorem \ref{thm:main}.

\begin{proof}[Proof of Theorem \ref{thm:main}]
By Lemma \ref{lem:MT_asymp}, we have
\begin{align}
&{\lim_{\ep_a\rightarrow0}\lim_{\ep_b\rightarrow0}\lim_{\ep_c\rightarrow0}}\omega_{\mathcal{U}}(-m_1+\ep_1, -m_2+\ep_2, -m_3+\ep_3)\\
&=\sum_{j=0}^2 (-1)^{m_{1+j}+m_{2+j}}b(m_{1+j}, m_{2+j}; m_{3+j}){\lim_{\ep_a\rightarrow0}\lim_{\ep_b\rightarrow0}\lim_{\ep_c\rightarrow0}}\frac{(-1)^{\ep_{1+j}}\ep_{1+j}+(-1)^{\ep_{2+j}}\ep_{2+j}}{\ep_{1+j}+\ep_{2+j}}\\
&\quad +\sum_{j=0}^2{ (-1)^{m_{3+j}}c(m_{1+j}, m_{2+j}; m_{3+j})}.
\end{align}
Since ${\displaystyle\lim_{\ep_a\rightarrow0}\lim_{\ep_b\rightarrow0}\lim_{\ep_c\rightarrow0}}\frac{(-1)^{\ep_{1+j}}\ep_{1+j}+(-1)^{\ep_{2+j}}\ep_{2+j}}{\ep_{1+j}+\ep_{2+j}}=1$, we only need to prove 
\begin{align}\label{eq:goal}
&\sum_{j=0}^2 (-1)^{m_{1+j}+m_{2+j}}b(m_{1+j}, m_{2+j}; m_{3+j})+\sum_{j=0}^2 (-1)^{m_{3+j}}c(m_{1+j}, m_{2+j}; m_{3+j})\\
&= \begin{cases} 1 & (m_1,m_2,m_3)=(0,0,0),\\ 0 & (m_1,m_2,m_3)\neq (0,0,0).\end{cases}
\end{align}
Since the left side of \eqref{eq:goal} coincides with the coefficient of $\frac{t_1^{m_1}t_2^{m_2}t_3^{m_3}}{m_1!m_2!m_3!}$ in the left side of \eqref{eq:gen_func}, the desired result follows from Lemma \ref{lem:gen}
\end{proof}

\section{Conclusion}

 By Theorem \ref{thm:main}, one can define the symmetric double omega values $\omega_{\mathcal{S}}(k_1,k_2,k_3)$ for non-positive integers $k_1,k_2,k_3\in \Z_{\le0}$ by
\begin{equation}\label{eq:limit_omega}
\omega_{\mathcal{S}}(k_1,k_2,k_3)=
{\lim_{\ep_a\rightarrow0}\lim_{\ep_b\rightarrow0}\lim_{\ep_c\rightarrow0}}\omega_{\mathcal{U}}(k_1+\ep_1,k_2+\ep_2,k_3+\ep_3)\mod \pi^2\mathcal{Z},
\end{equation}
which are either 0 or 1.
On the other hand, for $k_1,k_2,k_3\in \Z_{\le0}$ one computes
\begin{align}
\omega_{\mathcal{A}}(k_1,k_2,k_3)&=\left(\sum_{\substack{n_1+n_2<p\\n_1,n_2\ge1}} n_1^{-k_1}n_2^{-k_2}(p-n_1-n_2)^{-k_3}\mod p\right)_p\\
&=\left( \sum_{m_2=2}^{p-1}\sum_{m_1=1}^{m_2-1}m_1^{-k_1}(m_2-m_1)^{-k_2}(-m_2)^{-k_3}\mod p\right)_p\\
&=(-1)^{-k_2-k_3}\sum_{l=0}^{-k_2} (-1)^l \binom{-k_2}{l} \zeta_{\mathcal{A}}(k_1+k_2+l,k_3-l)
\end{align}
in $\mathcal{A}$.
Hence by \cite[Theorem 1.3]{Komori18}, for non-positive integers $k_1,k_2,k_3\in \Z_{\le0}$ we have
\[ \omega_{\mathcal{A}}(k_1,k_2,k_3)=\begin{cases} (1)_p & (k_1,k_2,k_3)=(0,0,0)\\ (0)_p & (k_1,k_2,k_3)\neq (0,0,0)\end{cases}.\]
Therefore the conjecture \eqref{eq:BTTConjecture} holds when we restrict $\boldsymbol{k}$ to elements in $\Z^3_{\le0}$.
Moreover, for integers $k_1,k_2,k_3$ such that $k_1\le k_2=0\le k_3$, as a consequence of \cite[Proposition 1.6]{Komori21}, we obtain the correspondence between the limit value \eqref{eq:limit_omega} and $\omega_{\mathcal{A}}(k_1,k_2,k_3)$, because we have $\omega_{\mathcal{U}}(k_1,0,k_3)=(-1)^{k_3}\zeta_{\mathcal{U}}(k_1,k_3)$ and $\omega_{\mathcal{A}}(k_1,0,k_3)=(-1)^{k_3}\zeta_{\mathcal{A}}(k_1,k_3)$.

Now a possible analogue of \eqref{eq:KomoriConjecture} for multiple zeta values of Mordell--Tornheim type is as follows.
Suppose that a certain limit ${\displaystyle\lim_{\boldsymbol{\ep}\rightarrow \boldsymbol{0}}}'\ \omega_{\mathcal{U}}(\boldsymbol{k}+\boldsymbol{\ep})$ exists in $\mathcal{Z}$ for all $\boldsymbol{k}\in \mathbb{I}$.
Then one can define the symmetric multiple omega values $\omega_{\mathcal{S}}(\boldsymbol{k})\in \mathcal{Z}/\pi^2\mathcal{Z}$ for all $\boldsymbol{k}\in \mathbb{I}$ by
\[\omega_{\mathcal{S}}(\boldsymbol{k})={\lim_{\boldsymbol{\ep}\rightarrow \boldsymbol{0}}}'\ \omega_{\mathcal{U}}(\boldsymbol{k}+\boldsymbol{\ep}) \mod \pi^2 \mathcal{Z}.\] 
One would ask if
\[\sum_{\boldsymbol{k}\in \mathbb{I}} a_{\boldsymbol{k}} \omega_{\mathcal{S}}(\boldsymbol{k})=0 \ \stackrel{?}{\Longleftrightarrow}  \ \sum_{\boldsymbol{k}\in \mathbb{I}} a_{\boldsymbol{k}} \omega_{\mathcal{A}}(\boldsymbol{k})=0\]
holds for a finite subset $\{a_{\boldsymbol{k}}\in\Q\mid \boldsymbol{k}\in \mathbb{I}, r\neq1\}$ of $\Q$.


\section*{Acknowledgments}
This work is partially supported by
JSPS KAKENHI Grant Number JP16H06336 and 20K14294.
{The authors thank Yasushi Komori for pointing out factual mistakes in the early paper}.


\end{document}